\newtheorem*{theorem*}{Theorem}
\newtheorem{lem}{Lemma}[subsection]
\journal{}
\theoremstyle{remark}
\newcommand{\E}{\mathbb{E}}
\newcommand{\R}{\mathbb{R}}
\newcommand{\Z}{\mathbb{Z}}
\renewcommand{\AA}{\mathcal{A}}
\newcommand{\PP}{\mathcal{P}}
\newcommand{\eps}{\varepsilon}
\newcommand{\p}[1]{\left(#1 \right)}
\newcommand{\defeq}{\vcentcolon=}
\newcommand{\dd}{\mathrm{d}}
\newcommand{\fmin}{f_{\min}}
\newcommand{\fmax}{f_{\max}}
\begin{document}

\begin{frontmatter}



\title{A short proof on the rate of convergence of the empirical measure for the Wasserstein distance}


\author[1]{Vincent Divol}

\affiliation[1]{organization={Universit\'e Paris-Saclay and Inria Saclay},
country={France}}
\ead{vincent.divol@inria.fr}

\begin{abstract}
\ 
 We provide a short proof that the Wasserstein distance between the empirical measure of a $n$-sample and the estimated measure is of order $n^{-1/d}$, if the measure has a lower and upper bounded density on the $d$-dimensional flat torus. 
\end{abstract}



\end{frontmatter}

 For $1\leq p < \infty$, let $W_p$ be the $p$-Wasserstein distance between measures, defined for two probability measures $\mu,\nu$ with finite $p$th moments supported on a metric space $(\Omega,\rho)$  by
\begin{equation}
W_p(\mu,\nu) \defeq \inf_{\pi\in \Pi(\mu,\nu)} C_p(\pi)^{1/p},
\end{equation}
where $\Pi(\mu,\nu)$ is the set of transport plans between $\mu$ and $\nu$, that is the set of probability measures on $\Omega\times \Omega$, with first marginal $\mu$ and second marginal $\nu$, and $C_p(\pi)=\iint \rho(x,y)^p \dd \pi(x,y)$ is the cost of the plan $\pi$. We define the distance $W_\infty$ by replacing the quantity $C_p(\pi)^{1/p}$ by the $\pi$-essential supremum of $\rho$. 

Let $\mu$ be a probability measure on some metric space $(\Omega,\rho)$, and let $\mu_n$ be the empirical measure associated to a $n$-sample $X_1,\dots,X_n$ of law $\mu$. The question of studying rates of convergence between $\mu$ and $\mu_n$ for Wasserstein distances $W_p$ has attracted a lot of attention over recent years (see e.g.~\cite{singh2018minimax,trillos2015rate}). 
 If no bounds on the density are assumed, then the quantity $\E W_p(\mu_n,\mu)$ is known to be bounded by a quantity of order $n^{-\frac{1}{2p}} + n^{-\frac{1}{d}}$ when $\Omega$ is a $d$-dimensional domain, and this bound is tight (see e.g~\cite{singh2018minimax}). For $p=\infty$, Nicol\'as Garc\' ia Trillos and Dejan Slep\v cev \cite{trillos2015rate} have shown that $\E W_\infty(\mu_n,\mu)$ is of order $(\log n/n)^{1/d}$ (for $d\geq 3)$ in the case where $\mu$ has a density $f$ which is lower bounded and upper bounded on some convex domain $\Omega$. As $W_p\leq W_\infty$, the same rate also holds for any $1\leq p \leq \infty$. This exhibits the following phenomenon: when $2p> d$, the problem of reconstructing $\mu$ for the Wasserstein distance is strictly harder if no bounds on the underlying density are assumed. 

In this note, we propose to give a short proof of the fact that $\E W_p(\mu_n,\mu)\lesssim n^{-1/d}$ (for $d\geq 3$) for bounded densities. We restrict to the case where $\Omega$ is the $d$-dimensional flat torus $\Omega$ in order to avoid complications due to boundary effects. Let $\PP_0$ be the set of probability distributions on $\Omega$, having a density $f$ satisfying $\fmin\leq f\leq \fmax$  for some $\fmax\geq \fmin>0$.
\begin{theorem*}\label{thm:main_thm}
Let $\mu\in \PP_0$ and $1\leq p <\infty$. Then, there exists a constant $C$ such that
\begin{equation}
\E W_p(\mu_n,\mu) \leq C\begin{cases} 
n^{-1/d} & \text{ if } d\geq 3,\\
(\log n)^{1/2}n^{-1/2}& \text{ if } d=2,\\
n^{-1/2} & \text{ if } d=1.
\end{cases}
\end{equation}
\end{theorem*}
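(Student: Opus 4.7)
My plan is to construct an explicit coupling between $\mu_n$ and $\mu$ via a multi-scale dyadic decomposition of the torus and bound its cost using binomial variance estimates, with the bounded density assumption entering through the volume bounds on cube masses.

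Fix $h \asymp n^{-1/d}$, set $K = \lceil \log_2(1/h) \rceil$, and for $k = 0, \ldots, K$ let $\mathcal{Q}_k$ denote the nested partition of $\T^d$ into $2^{kd}$ dyadic cubes of side $2^{-k}$, so that each cube at scale $k$ has $2^d$ children at scale $k+1$. Writing $\alpha_Q := \mu_n(Q) - \mu(Q)$, I build a coupling in two phases. First, within each finest cube $Q \in \mathcal{Q}_K$, match the common mass $\mu_n \wedge \mu$, paying at most $\sqrt{d}\, h$ per unit transported. Second, iteratively for $k = K-1, \ldots, 0$, within each parent cube at scale $k$, redistribute the residual excesses of its $2^d$ children so that each ends up with a $1/2^d$ share of the parent's excess; each such transport pays at most $\sqrt{d}\, 2^{-k}$ per unit. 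This yields
\begin{equation*}
W_p^p(\mu_n, \mu) \;\lesssim_{p,d}\; h^p + \sum_{k=0}^{K-1} 2^{-kp} \sum_{Q \in \mathcal{Q}_{k+1}} |\alpha_Q|.
\end{equation*}

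Taking expectation and using $\Var(\mu_n(Q)) \leq \mu(Q)/n \leq \fmax 2^{-kd}/n$, Cauchy--Schwarz over the $2^{(k+1)d}$ cubes at scale $k+1$ gives $\E\sum_Q|\alpha_Q| \leq \sqrt{2^{(k+1)d}/n}$, whence
\begin{equation*}
\E W_p^p(\mu_n, \mu) \;\lesssim\; h^p + n^{-1/2}\sum_{k=0}^K 2^{k(d/2 - p)}.
\end{equation*}
For $p < d/2$ and $d \geq 3$, the geometric sum is dominated by its largest scale $k=K$, so both terms are of order $n^{-p/d}$, and Jensen's inequality yields $\E W_p \lesssim n^{-1/d}$. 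The remaining cases fall out similarly: the borderline dimension $d=2$ with $p=1$ produces a balanced sum and the claimed $\sqrt{\log n}$ factor, and for $d=1$ the sum is dominated by the coarsest scales and gives $n^{-1/2}$; for $d=1$ with general $p$ a cleaner route goes through the 1D quantile identity $W_p^p(\mu_n,\mu) = \int_0^1 |F_{\mu_n}^{-1} - F_\mu^{-1}|^p \,du$ together with $|F_\mu^{-1}|_{\Lip} \leq \fmin^{-1}$.

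The main obstacle I foresee is the regime $p > d/2$ with $d\geq 3$ (e.g.\ $p = 2$ in $d = 3$), where the Cauchy--Schwarz step above is only tight at the coarse end of the sum and delivers just $\E W_p^p \lesssim n^{-1/2}$, translating via Jensen to the suboptimal rate $n^{-1/(2p)}$ rather than $n^{-1/d}$. To close this gap one must replace the $L^1$ bookkeeping $\sum_Q|\alpha_Q|$ by an $L^p$ bookkeeping $\sum_Q |\alpha_Q|^p$, combined with the higher-moment estimate $\E|\alpha_Q|^p \lesssim_p (\mu(Q)/n)^{p/2}$; this is the point at which the upper bound $\fmax$ must be used beyond its volumetric role, and the natural vehicle is a Besov--Wasserstein-type transport inequality $W_p^p \lesssim \sum_k 2^{-kp} \|\Delta_k (\mu_n - \mu)\|_{L^p}^p$ on the flat torus. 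Establishing and threading such a sharpened inequality through the multiscale sum, in place of the naive $L^1$ bound, is what I expect to be the delicate heart of the short proof.
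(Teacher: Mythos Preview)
Your dyadic construction is precisely the ``standard'' route the paper explicitly sets aside; the paper goes instead through the pointed negative Sobolev norm. The key input is the Peyre-type inequality $W_p(\mu,\nu)\le p\,\fmin^{1/p-1}\|f-g\|_{\dot H_p^{-1}}$, valid as soon as one of the two measures has density $\ge\fmin$. One smooths $\mu_n$ by a kernel at scale $h\asymp n^{-1/d}$ (paying $O(h)$ in $W_p$), applies this inequality to compare the smoothed density $f_{n,h}$ with $f$, and then controls $\|f_{n,h}-f\|_{\dot H_p^{-1}}$ by Fourier tools: Mikhlin multipliers to reduce to an $L_p$ norm, Rosenthal's moment inequality for the i.i.d.\ sum, and Hausdorff--Young to estimate $\|\AA(K_h)\|_{L_p}$. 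No multiscale sum appears.

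Your diagnosis of the gap is accurate --- the $L^1$ bookkeeping is stuck at $n^{-1/(2p)}$ once $2p>d$ --- but the proposed repair misidentifies the missing ingredient. It is the \emph{lower} bound $\fmin$, not $\fmax$, that is essential: with only an upper bound on the density one may put all the mass on two well-separated small balls, and the binomial fluctuation of their masses already forces $W_p\gtrsim n^{-1/(2p)}$, so no refinement of the moment control on the $\alpha_Q$ can help. A Besov--Wasserstein transport inequality of the type you sketch does exist, but it requires $\fmin>0$ to hold (this is exactly the content of the Peyre bound above), and once you invoke it you have essentially merged into the paper's argument rather than completed the dyadic one. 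A smaller point: for $d=2$, $p=1$ your sum yields $n^{-1/2}\log n$, not the claimed $n^{-1/2}\sqrt{\log n}$; obtaining the sharp logarithm likewise requires the Fourier/Sobolev route.
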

The standard approach for bounding the distance $W_p(\mu_n,\mu)$ consists in precisely assessing the masses given by the measures $\mu_n$ and $\mu$ on dyadic partitions of the domain $\Omega$ (see e.g.~\cite{trillos2015rate}). We propose to take a different route by relying on a result from \cite{peyre2018comparison} which asserts that the Wasserstein distance is controlled by the pointed negative Sobolev distance when comparing measures having lower bounded densities. The proof is then completed by using tools from Fourier analysis. 

We also note that minimax results from \cite{weed2019estimation} (proven for measures on the cube) can be straightforwardly adapted to the setting of the flat torus. In particular, those results imply that the rates exhibited in the theorem are optimal on the class $\PP_0$ (up to a logarithmic factor for $d= 2$).
%
%


\section*{The proof} 
As $W_p\geq W_q$ if $p\geq q$, we may assume that $p\geq 2$. The proof of the theorem is heavily based on the following result of optimal transport theory, appearing in \cite{peyre2018comparison,nietert2021smooth}. Let $p^*$ be the conjugate exponent of $p$. For $\phi\in L_p$ with $\int \phi=0$, introduce the pointed negative Sobolev norm
\begin{equation}
\|\phi\|_{\dot H_p^{-1}} \defeq \sup\left\{\int \phi\psi,\ \|\nabla \psi\|_{L_{p^*}} \leq 1\right\},
\end{equation}
where the supremum is taken over all smooth functions $\psi$ defined on $\Omega$.
\begin{lem}\label{lem:santam}
Let $\mu,\nu$ be two measures on $\Omega$ having densities $f,g$. Assume that $f\geq \fmin$. Then,
\begin{equation}
W_p(\mu,\nu) \leq p\fmin^{1/p-1}\|f-g\|_{\dot H_p^{-1}}.
\end{equation}
\end{lem}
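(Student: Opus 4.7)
The plan is to combine the Benamou--Brenier representation of $W_p$ with the dual formulation of $\|\cdot\|_{\dot H_p^{-1}}$, using a non-linear time reparametrization of the interpolation between $f$ and $g$ that exploits only the one-sided bound on $f$.

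First I would invoke $L_p$--$L_{p^*}$ duality (integration by parts plus Hahn--Banach) to write
\begin{equation*}
\|f-g\|_{\dot H_p^{-1}} = \inf\left\{\|\xi\|_{L_p(\Omega)}\;:\;\xi:\Omega\to\R^d,\;\nabla\cdot \xi = f-g\right\},
\end{equation*}
and pick a near-minimizer $\xi$. Then I would introduce the curve of densities $\rho_t\defeq(1-t)^p f+(1-(1-t)^p)g$ and the velocity field $v_t\defeq p(1-t)^{p-1}\xi/\rho_t$. A direct check shows $\rho_0=f$, $\rho_1=g$, and $(\rho_t,v_t)$ satisfies the continuity equation $\partial_t\rho_t+\nabla\cdot(\rho_t v_t)=0$, so the Benamou--Brenier inequality on the flat torus gives
\begin{equation*}
W_p(\mu,\nu)^p \leq \int_0^1\!\!\int_\Omega |v_t|^p \rho_t\,\dd x\,\dd t = p^p\int_\Omega |\xi|^p\int_0^1 (1-t)^{p(p-1)}\rho_t^{1-p}\,\dd t\,\dd x.
\end{equation*}

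The key pointwise estimate is a clean cancellation: since $g\geq 0$, one has $\rho_t\geq (1-t)^p f$, and using $1-p\leq 0$,
\begin{equation*}
(1-t)^{p(p-1)}\rho_t^{1-p}\leq (1-t)^{p(p-1)}\cdot (1-t)^{-p(p-1)}f^{1-p}=f^{1-p}\leq \fmin^{1-p}.
\end{equation*}
The $(1-t)$-powers vanish exactly, so the time integral is bounded by $\fmin^{1-p}$ uniformly in $x$. Plugging back yields $W_p(\mu,\nu)^p\leq p^p\fmin^{1-p}\|\xi\|_{L_p}^p$; taking $p$-th roots and passing to the infimum in $\xi$ concludes.

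The main obstacle is identifying the right interpolation. The naive linear choice $\rho_t=(1-t)f+tg$ gives an integrand $\rho_t^{1-p}$ that behaves like $\min(f,g)^{1-p}$, blowing up where $g$ is small and so uncontrollable in terms of $\fmin$ alone. The exponent $(1-t)^p$ is the unique reparametrization producing exact cancellation between the $|v_t|^p$-factor and the lower bound on $\rho_t$; variationally, it is the minimizer of $\int_0^1 |\eta'(t)|^p\eta(t)^{1-p}\,\dd t$ over decreasing $\eta:[0,1]\to[0,1]$ with $\eta(0)=1,\eta(1)=0$, the extremal value being exactly $p^p$, which accounts for the factor of $p$ in the bound.
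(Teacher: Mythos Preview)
The paper does not supply its own proof of this lemma; it is quoted from \cite{peyre2018comparison,nietert2021smooth}. Your argument is correct and is, in substance, the argument of those references. The duality
\[
\|\phi\|_{\dot H_p^{-1}}=\min\bigl\{\|\xi\|_{L_p(\Omega;\R^d)}:\nabla\cdot\xi=\phi\text{ weakly}\bigr\}
\]
holds for $\int\phi=0$ by Hahn--Banach applied to the bounded functional $\nabla\psi\mapsto\int\phi\,\psi$ on the gradient subspace of $L_{p^*}(\Omega;\R^d)$ (so the infimum is actually attained and no near-minimizer is needed). With $\xi$ a minimizer and the curve $\rho_t=(1-t)^p f+\bigl(1-(1-t)^p\bigr)g$, the momentum $m_t=\rho_t v_t=p(1-t)^{p-1}\xi$ solves the continuity equation, and the Benamou--Brenier inequality on the torus gives
\[
W_p(\mu,\nu)^p\le p^p\int_\Omega|\xi|^p\int_0^1(1-t)^{p(p-1)}\rho_t^{1-p}\,\dd t\,\dd x\le p^p\fmin^{1-p}\|\xi\|_{L_p}^p,
\]
using $\rho_t\ge(1-t)^p f\ge(1-t)^p\fmin$ exactly as you wrote. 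The nonlinear reparametrization $\eta(t)=(1-t)^p$ is precisely the device that produces a bound depending only on the one-sided constraint $f\ge\fmin$ (linear interpolation would instead require a lower bound on $g$), and your variational remark correctly explains the appearance of the constant $p$. There is nothing further in the paper to compare against.
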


 Let $K$ be a smooth radial nonnegative function with $\int K=1$, supported on the unit ball and, for $h>0$, let $K_h=h^{-d}K(\cdot/h)$. Let $\mu_{n,h}$ be the measure having density $K_h*\mu_n$ on $\Omega$, i.e.~the density at a point $x\in \Omega$ is given by $f_{n,h}(x)\defeq \sum_{j=1}^n K_h(x-X_j)/n$.

\begin{lem}\label{lem:approximation}
We have $W_p(\mu_n,\mu_{n,h}) \leq C_0 h$, where $C_0= \p{\int |x|^p K(x)\dd x}^{1/p}$.
\end{lem}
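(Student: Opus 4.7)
The plan is to exhibit an explicit transport plan $\pi \in \Pi(\mu_n,\mu_{n,h})$ whose cost equals $(C_0 h)^p$. Since $\mu_{n,h}$ is obtained by convolving each atom of $\mu_n$ with the kernel $K_h$, the natural plan is, for each sample point $X_j$, to send the mass $1/n$ sitting at $X_j$ to the density $K_h(\cdot - X_j)/n$. Explicitly, I would set
\begin{equation}
\pi \defeq \frac{1}{n}\sum_{j=1}^n \delta_{X_j} \otimes \bigl( K_h(\cdot - X_j)\, \mathrm{Leb}\bigr).
\end{equation}

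I would first verify that $\pi$ is admissible. The first marginal is $\frac{1}{n}\sum_j \delta_{X_j} \cdot \int K_h = \mu_n$ because $\int K_h = 1$, and the second marginal integrates to the density $\frac{1}{n}\sum_j K_h(\cdot - X_j) = f_{n,h}$, giving $\mu_{n,h}$. So $\pi \in \Pi(\mu_n, \mu_{n,h})$.

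Next I would compute the cost. Since $K$ is supported in the unit ball, $K_h(\cdot - X_j)$ is supported in the ball of radius $h$ around $X_j$, and for $h$ small enough the torus distance $\rho$ agrees with the Euclidean distance on that ball. A change of variable $u = (y - X_j)/h$ then gives
\begin{equation}
\int \rho(X_j,y)^p K_h(y-X_j)\, \dd y = \int |y - X_j|^p h^{-d} K\bigl((y-X_j)/h\bigr)\, \dd y = h^p \int |u|^p K(u)\, \dd u = h^p C_0^p,
\end{equation}
independently of $j$. Averaging over $j$ gives $C_p(\pi) = h^p C_0^p$, and hence $W_p(\mu_n, \mu_{n,h}) \leq C_p(\pi)^{1/p} = C_0 h$.

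There is essentially no obstacle here: the only delicate point is the comparison between the flat torus metric and the Euclidean metric used to define the kernel, but this is a non-issue once $h$ is smaller than the injectivity radius, which we may assume. The argument can also be phrased more abstractly by noting that $W_p^p$ is convex in each marginal and that $W_p$ between $\delta_0$ and the measure of density $K_h$ is exactly $C_0 h$ by direct computation.
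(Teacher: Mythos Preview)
Your proof is correct and follows essentially the same approach as the paper: the paper also builds the transport plan $\frac{1}{n}\sum_j \pi_j$ where $\pi_j$ couples $\delta_{X_j}$ with $K_h*\delta_{X_j}$, and computes its cost by the same change of variable. Your version is simply more explicit about checking the marginals and handling the torus metric.
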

\begin{proof}
Consider the unique transport plan $\pi_j$ between $K_h*\delta_{X_j}$ and $\delta_{X_j}$. The cost of $\pi_j$ is equal to $ \int |x-X_j|^p K_h(x-X_j)\dd x =h^p \int |x|^p K(x)\dd x$. The measure $\frac{1}{n}\sum_{j=1}^n\pi_j$ is a transport plan between $\mu_{n,h}$ and $\mu_n$, with associated cost equal to $h^p\int |x|^p K(x)\dd x$.
\end{proof}
By Lemmas \ref{lem:santam} and \ref{lem:approximation}, 
\begin{equation}\label{eq:first}
\begin{split}
\E W_p(\mu_n,\mu)&\leq \E W_p(\mu_n,\mu_{n,h}) +\E W_p(\mu_{n,h},\mu) \\
&\leq C_0h+p\fmin^{1/p-1}\E  \|f_{n,h}-f\|_{\dot H_p^{-1}}.
\end{split}
\end{equation}
To further bound this quantity, we use the following relation between the negative Sobolev norm and the Fourier decomposition of a signal.
 Given $\phi\in L_p$, we let $\hat\phi$ be the sequence of Fourier coefficients of $\phi$ (indexed by $\Z^d$) and denote by $ ^\vee$ the inverse Fourier transform. Let $|x|\defeq \sum_{i=1}^d|x_i|$ for $x\in \R^d$. A multiplier $s$ is a bounded sequence indexed by $\Z^d$ such that the operator $\phi\in L_p\mapsto (s\hat\phi)^{\vee}\in L_p$ is bounded. A sufficient condition for a sequence to be a multiplier is given by  Mikhlin multiplier theorem \cite[Theorem 3.6.7, Theorem 5.2.7]{Grafakos2008}.

\begin{lem}\label{lem:mikhlin}
Let $s:\R^d\to \R$ be a smooth function such that $|\partial^\alpha s(\xi)|\leq B|\xi|^{-|\alpha|}$ for every multiindex $\alpha$ with $|\alpha|\leq d/2+1$. Then, the sequence $(s(m))_{m\in\Z^d}$ is a multiplier with corresponding operator of norm smaller than $C_{p,d}B$.
\end{lem}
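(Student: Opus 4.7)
The plan is to deduce this torus-side multiplier bound from the classical Mikhlin--H\"ormander multiplier theorem on $\R^d$ via a de Leeuw-type transference principle. These are exactly the two references cited right before the lemma statement: Grafakos' Theorem 5.2.7 is the Mikhlin--H\"ormander theorem on $\R^d$, and Theorem 3.6.7 is the transference between continuous Euclidean Fourier multipliers and toroidal Fourier multipliers. The lemma is then essentially just the composition of those two black boxes.

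First I would rewrite the hypothesis in its more familiar Mikhlin form
\begin{equation*}
|\xi|^{|\alpha|}\,|\partial^\alpha s(\xi)|\leq B \quad \text{for all } \xi\neq 0 \text{ and all } |\alpha|\leq \lfloor d/2\rfloor+1,
\end{equation*}
and apply Grafakos' Theorem 5.2.7 to conclude that the Euclidean Fourier multiplier operator $T_s^{\R^d}:\phi\mapsto (s\hat\phi)^{\vee}$ is bounded on $L_p(\R^d)$ for every $1<p<\infty$, with operator norm at most $C_{p,d}^{(1)} B$. The case $\alpha=0$ of the hypothesis already yields the uniform bound $\|s\|_\infty\leq B$, which is needed as a pointwise assumption on the symbol.

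Second, I would transfer this continuous estimate to the lattice-sampled symbol. Grafakos' Theorem 3.6.7 states that if a bounded continuous function $s$ on $\R^d$ is an $L_p(\R^d)$ Fourier multiplier of norm $M$, then the restricted sequence $(s(m))_{m\in\Z^d}$ is an $L_p(\T^d)$ Fourier multiplier of norm at most $M$. Since our $s$ is $C^\infty$ on all of $\R^d$ and uniformly bounded by $B$, the hypotheses of the transference theorem are satisfied, and composing with the estimate of Step~1 gives $\|T_{(s(m))}\|_{L_p(\Omega)\to L_p(\Omega)}\leq C_{p,d} B$, which is the claim.

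The main potentially subtle point I would expect is verifying that the transference theorem truly applies to our $s$: in some formulations one needs $s$ to be regulated at the lattice points (e.g.\ continuous at every $m\in\Z^d$, or a Lebesgue point of $s$), and different authors state the transference under slightly different technical hypotheses. Here $s$ is globally smooth, so this is automatic. The only other bookkeeping is that the Mikhlin hypothesis at the origin (where $|\xi|^{-|\alpha|}$ blows up) imposes no real constraint on $s$ there and the transference only samples at integer points, so no modification of $s$ near the origin is needed. Beyond that, every estimate is pulled off the shelf from Grafakos and no new analysis is required.
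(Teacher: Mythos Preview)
Your proposal is correct and matches the paper's approach exactly: the paper offers no proof beyond citing Grafakos' Theorems 3.6.7 and 5.2.7, and you have correctly identified these as the de Leeuw transference principle and the Mikhlin--H\"ormander multiplier theorem respectively, and explained precisely how to compose them. Your remarks on the technical hypotheses (smoothness at lattice points, behavior at the origin) are accurate and dispose of the only possible subtleties.
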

Let $a:\R^d\to \R$ be a smooth function with $a(\xi) = 1/|\xi|$ for $|\xi|\geq 1$ and $a(0)=0$. Let $\AA$ be the associated multiplier operator (by Lemma \ref{lem:mikhlin}) defined by $\AA(\phi) = (a \hat\phi)^\vee$.
\begin{lem}\label{lem:riesz}
Let $\phi\in L_p$ with $\int \phi=0$. Then, $\|\phi\|_{\dot H_p^{-1}}\leq C_5\|\AA(\phi)\|_{L_p}$.
\end{lem}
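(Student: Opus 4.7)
The plan is to attack the bound via a duality argument: I will construct a vector field $F = (F_1, \dots, F_d) \in L_p(\Omega;\R^d)$ with $\Div F = \phi$ in the distributional sense and $\|F\|_{L_p} \leq C\|\AA(\phi)\|_{L_p}$. Once such $F$ is in hand, for any smooth test function $\psi$ integration by parts on the torus gives
\begin{equation*}
\int \phi\,\psi = -\sum_{j=1}^d \int F_j\,\partial_j \psi,
\end{equation*}
and H\"older's inequality yields $\bigl|\int \phi\,\psi\bigr| \leq C\|\AA(\phi)\|_{L_p}\,\|\nabla \psi\|_{L_{p^*}}$, from which the claim follows by taking the supremum over admissible $\psi$.

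The natural construction of $F$ proceeds on the Fourier side. Since $\int \phi = 0$, one has $\hat\phi(0) = 0$, so the standard right inverse of the divergence is $\hat F_j(m) = \tfrac{-i m_j}{2\pi |m|^2}\hat\phi(m)$ for $m \neq 0$ (and $\hat F_j(0) = 0$), which indeed satisfies $2\pi i\sum_j m_j \hat F_j(m) = \sum_j (m_j^2/|m|^2)\hat\phi(m) = \hat\phi(m)$. Using that $\widehat{\AA(\phi)}(m) = \hat\phi(m)/|m|$ for $m \neq 0$, this rewrites as $\hat F_j(m) = r_j(m)\,\widehat{\AA(\phi)}(m)$, where $r_j$ coincides with $-i\xi_j/(2\pi |\xi|)$ on $\{|\xi|\geq 1\}$. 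Concretely I take $r_j(\xi) \defeq -i\xi_j a(\xi)/(2\pi)$, which is smooth on all of $\R^d$ since $a$ is.

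The core technical step is then to verify that $F_j = R_j\AA(\phi)$ is controlled in $L_p$ via Mikhlin's theorem. For $|\xi|\geq 1$, $r_j$ agrees with the homogeneous-of-degree-zero function $-i\xi_j/(2\pi|\xi|)$, so its $\alpha$th derivative is homogeneous of degree $-|\alpha|$ and bounded by $B|\xi|^{-|\alpha|}$; for $|\xi|\leq 1$, $r_j$ is smooth with bounded derivatives, and since $|\xi|^{-|\alpha|} \geq 1$ in that region the same inequality still holds. Lemma~\ref{lem:mikhlin} therefore gives $\|F_j\|_{L_p} \leq C_{p,d}\|\AA(\phi)\|_{L_p}$, and summing over $j$ concludes. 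The main potential obstacle is precisely this verification of the Mikhlin condition near the origin: the symbol $\xi_j/|\xi|^2$ one would associate directly to $\phi$ is \emph{not} a Mikhlin multiplier, but factoring through $\AA$ removes exactly one power of $|\xi|$ so that the remaining symbol $\xi_j a(\xi)$ becomes genuinely admissible.
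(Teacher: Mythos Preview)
Your argument is sound in structure and is precisely the dual of the paper's proof, but you have silently replaced the paper's norm $|x|=\sum_i|x_i|$ by the Euclidean one. With the paper's convention, $a(m)=1/|m|_1$ at nonzero lattice points, so your check $\sum_j m_j^2/|m|^2=1$ fails and $\Div F\neq\phi$; moreover $\xi_j/|\xi|_1$ is not smooth off the coordinate hyperplanes, so your homogeneity argument for the Mikhlin bounds on $r_j$ also breaks down. The fix is easy: take instead $r_j(m)=-i\,\eps_j(m)/(2\pi)$ with $\eps_j(m)=\sign(m_j)$, so that $2\pi i\sum_j m_j r_j(m)\,a(m)=\sum_j|m_j|/|m|_1=1$ and each $\eps_j$ is a bounded multiplier (the Hilbert transform in the $j$th coordinate). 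Alternatively, redefining $a$ via the Euclidean norm makes your Riesz-transform construction work verbatim, and the rest of the paper is insensitive to that choice.

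With that caveat, the two proofs are the same duality read in opposite directions. The paper stays on the $\psi$ side: it writes $\int\phi\psi=\int\AA(\phi)\,(|\cdot|\hat\psi)^\vee$, then bounds $\|(|\cdot|\hat\psi)^\vee\|_{L_{p^*}}$ by decomposing $|m|=\sum_i\eps_i(m)\,m_i$ and applying the sign multipliers to $\partial_i\psi$. You stay on the $\phi$ side: you manufacture $F$ with $\Div F=\phi$ by applying (essentially the same) multipliers to $\AA(\phi)$ and then integrate by parts. Your version has the small conceptual bonus of exhibiting an explicit vector field witnessing the $\dot H_p^{-1}$ bound, which connects naturally with the Benamou--Brenier picture behind Lemma~\ref{lem:santam}.
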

\begin{proof}
Let $\psi:\Omega\to \R$ be a smooth function with $\|\nabla \psi\|_{L_{p^*}}\leq 1$. As $\hat\phi(0)=0$, we have \[\int \phi \psi = \sum_{m\in \Z^d} \hat\phi(m)\hat \psi(m) = \sum_{m\in \Z^d} a(m)\hat \phi(m)|m| \hat \psi(m) \leq \|\AA(\phi)\|_{L_p} \|(|\cdot|\hat \psi)^\vee\|_{L_{p^*}}.\] 
Note that $|\cdot|= \sum_{i=1}^d \eps_i e_i$, where $e_i(m)=m_i$ and $\eps_i(m)$ is the sign of $m_i$. As $\eps_i$ is a multiplier (by Lemma \ref{lem:mikhlin}), we have $\|(|\cdot|\hat \psi)^\vee\|_{L_{p^*}}\leq c\sum_{i=1}^d \|(e_i\hat \psi)^\vee\|_{L_{p^*}} = c\sum_{i=1}^d \|\partial_i \psi\|_{L_{p^*}} \leq  C_5$. 
\end{proof}
Hence, to conclude, it suffices to bound $\E\|\AA(f_{n,h}-f)\|_{L_p}\leq  \|\AA(f_h-f)\|_{L_p} + \E\|\AA(f_{n,h}-f_h)\|_{L_p}.$

\paragraph{Bound of the bias} Let $\kappa$ be the Fourier transform of $K$. As $K$ is smooth and compactly supported, $\kappa$ is a multiplier by Lemma \ref{lem:mikhlin}. Also, the function $M=a\cdot(\kappa-1)$ is a multiplier as a product of multiplier. Remark that $\hat f_h-\hat f= (\kappa(h\hspace{.05cm}\cdot\hspace{.05cm})-1)\hat f$, so that $\AA(f_h-f) = h(M(h \hspace{.05cm}\cdot\hspace{.05cm})\hat f)^{\vee}$. As the multiplier norms of $M$ and $M(h\hspace{.05cm}\cdot\hspace{.05cm})$ are equal \cite[Theorem 3.6.7]{Grafakos2008}, we have 
\begin{equation}\label{eq:bias}
 \|\AA(f_h-f)\|_{L_p} \leq hC_6\|f\|_{L_p}  \leq hC_6\fmax.
\end{equation}

\paragraph{Bound of the fluctuations} Eventually, we bound 
\begin{equation}
\E\|\AA(f_{n,h}-f_h)\|_{L_p} \leq \E\left[\|\AA(f_{n,h}-f_h)\|_{L_p}^p\right]^{1/p}.
\end{equation} 
 The random variable $\AA(f_{n,h})$ is equal to $n^{-1}\sum_{j=1}^n U_j$, where $U_j  \defeq \AA(K_h*\delta_{X_j}) = \AA(K_h)(\hspace{.05cm}\cdot-X_j)$ and $\E U_j = \AA(f_h)$.
We control the expectation of the $L_p$-norm of the sum of i.i.d.~centered functions thanks to the next lemma, which is a direct consequence of Rosenthal inequality \cite{rosenthal1970subspaces}.
\begin{lem}\label{lem:rosenthal}
Let $U_1,\dots,U_n$ be i.i.d.~functions on $L_p$. Then, the expectation $\E \left\|\frac{1}{n}\sum_{i=1}^n (U_i-\E U_i)\right\|_{L_p}^p$ is smaller than
\begin{equation}\label{eq:rosenthal}
C_pn^{-p/2} \int \p{\E |U_1(x)|^2}^{p/2}\dd x + C_p n^{1-p}\int \E \left[|U_1(x)|^p\right]\dd x.
\end{equation}
\end{lem}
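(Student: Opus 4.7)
The plan is to reduce the $L_p$-norm bound to a pointwise estimate and then apply the classical scalar Rosenthal inequality at every point of $\Omega$. First I would invoke Fubini's theorem to write
\[
\E\left\|\tfrac{1}{n}\sum_{i=1}^n (U_i - \E U_i)\right\|_{L_p}^p = \int_\Omega \E\left|\tfrac{1}{n}\sum_{i=1}^n \p{U_i(x) - \E U_i(x)}\right|^p \dd x,
\]
which is valid as soon as the right-hand side of the claimed inequality is finite (otherwise the bound is trivial).

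Next, for each fixed $x\in \Omega$, the scalar random variables $V_i(x)\defeq U_i(x) - \E U_i(x)$ are i.i.d.\ and centered, so the classical Rosenthal inequality \cite{rosenthal1970subspaces} for real-valued sums applies directly:
\[
\E\left|\tfrac{1}{n}\sum_{i=1}^n V_i(x)\right|^p \leq C_p\, n^{-p/2}\p{\E V_1(x)^2}^{p/2} + C_p\, n^{1-p}\, \E|V_1(x)|^p.
\]
To return from $V_i$ to $U_i$, I would use the elementary comparisons $\E V_1(x)^2 \leq \E U_1(x)^2$ (subtracting the mean only decreases the second moment) and $\E|V_1(x)|^p \leq 2^p\, \E|U_1(x)|^p$ (by the triangle inequality together with Jensen's inequality), absorbing the resulting factor $2^p$ into the constant $C_p$.

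Finally I would integrate this pointwise bound in $x\in \Omega$. Since the function $x \mapsto (\E U_1(x)^2)^{p/2}$ and $x\mapsto \E|U_1(x)|^p$ are nonnegative and measurable, a second application of Fubini yields exactly the two terms on the right-hand side of the claimed inequality. I do not anticipate any real obstacle here: once scalar Rosenthal is taken as a black box, the only delicate point is checking the applicability of Fubini, which is routine under the implicit integrability of the $U_i$. The statement is essentially a vector-valued reformulation of a standard scalar estimate.
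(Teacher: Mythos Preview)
Your proposal is correct and matches the paper's intent: the paper gives no proof beyond noting that the lemma ``is a direct consequence of Rosenthal inequality,'' and your Fubini--plus--pointwise-scalar-Rosenthal argument is precisely that direct consequence. There is no alternative route to compare with.
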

Let $v_h$ be the sequence in $\ell_{p^*}(\Z^d)$ defined by $v_h(m) = a(m)\kappa(hm)$ for $m\in \Z^d$. By a change of variable, we obtain
\begin{equation}
\E \left[|U_1(x)|^p\right] = \int f(y)|\AA(K_h)(x-y)|^p\dd y \leq \fmax \|\AA(K_h)\|_{L_p}^p\leq \fmax \|v_h\|_{\ell_{p^*}}^p,
\end{equation}
where, at the last line, we applied Hausdorff-Young inequality \cite[Section XII.2]{zygmund2003trigonometric}. The last step consists in bounding $\|v_h\|_{\ell_{p^*}}^{p^*}$. We separate this quantity into two parts: $S_0 = \sum_{|hm|\leq 1} |v_h(m)|^{p^*}$ and $S_1 = \sum_{|hm|> 1} |v_h(m)|^{p^*}$. To bound $S_0$, we use that $\kappa$ is bounded on the unit ball, so that $S_0$ is of the order 
\begin{equation}\label{eq:controlS_0}
\sum_{|hm|\leq 1}|m|^{-p^*}\lesssim \begin{cases}
h^{p^*-d} &\text{ if } d\geq 3 \text{ or } (d=2 \text{ and } p>2)\\
-\log h &\text{ if } p=d=2\\
1 & \text{ if } d=1.
\end{cases}
\end{equation}
To bound $S_1$, we use that $|\kappa(hm)|\leq C_\gamma|hm|^{-\gamma}$ for any $\gamma>0$. Choosing $\gamma$ such that $\gamma p^*+p^*>d$, we obtain that $S_1$ is of the order 
\begin{equation}\label{eq:controlS_1}
h^{-\gamma p^*}\sum_{|hm|> 1}|m|^{-\gamma p^*-p^*} \lesssim h^{p^*-d}.
\end{equation}
Putting together inequalities \eqref{eq:rosenthal}, \eqref{eq:controlS_0} and \eqref{eq:controlS_1} yields that, for $h$ of the order $n^{-1/d}$, the expectation $\E\|\AA(f_{n,h}-f_h)\|_{L_p}$ is of the order
\begin{equation}\label{eq:var}
\begin{cases}
h/\sqrt{nh^d} \lesssim n^{-1/d} & \text{ if } d\geq 3,\\
\sqrt{(-\log h)/n}\lesssim (\log n)^{1/2}n^{-1/2} &\text{ if } d=2,\\
n^{-1/2} &\text{ if } d=1.
\end{cases}
\end{equation}
We conclude the proof by putting together the estimates \eqref{eq:first}, \eqref{eq:bias} and \eqref{eq:var}.

\paragraph{Remark 1}
For $p=2$, Mikhlin multiplier theorem can be replaced by Parseval's theorem, further simplifying the proof.

\paragraph{Remark 2}
A similar proof shows that the risk of the measure $\mu_{n,h}$ satisfies $\E W_p(\mu_{n,h},\mu)\lesssim n^{-(s+1)/(2s+d)}$ if $f$ is assumed to be of regularity $s$. Indeed, we can exploit the regularity of $s$ to show that, if $\kappa$ has sufficiently many zero derivatives at $0$, then the bias term is of order $h^{s+1}$, while the fluctuation terms is bounded in the same way. We then obtain the desired rate by choosing $h$ of the order $n^{-1/(2s+d)}$. This rate is in accordance with the minimax result of \cite{weed2019estimation}, where a modified wavelet density estimator is shown to attain the same rate of convergence.

\bibliographystyle{plain}
\bibliography{biblio.bib}

\end{document}